\pdfoutput=1
\RequirePackage{ifpdf}
\ifpdf 
\documentclass[pdftex]{sigma}
\else
\documentclass{sigma}
\fi

\numberwithin{equation}{section}

\newtheorem{Theorem}{Theorem}[section]
\newtheorem*{Theorem*}{Theorem}
\newtheorem{Corollary}[Theorem]{Corollary}

\newtheorem{Proposition}[Theorem]{Proposition}
 { \theoremstyle{definition}

\newtheorem*{geomdef}{Geometric Definition}
\newtheorem*{standef}{Standard Definition}

\newtheorem*{xpl}{Example}
 }

\usepackage{mathrsfs}
\def\CP{\mathbb{CP}}
\def\CC{\mathbb{C}}
\def\ZZ{\mathbb{Z}}
\def\RR{\mathbb{R}}

\DeclareMathOperator{\rank}{rank}
\DeclareMathOperator{\todd}{Td}
\DeclareMathOperator{\Ext}{Ext}

\begin{document}
\allowdisplaybreaks

\newcommand{\arXivNumber}{2108.01739}

\renewcommand{\thefootnote}{}

\renewcommand{\PaperNumber}{102}

\FirstPageHeading

\ShortArticleName{Twistors, Self-Duality, and Spin$^c$ Structures}

\ArticleName{Twistors, Self-Duality, and Spin$\boldsymbol{{}^c}$ Structures\footnote{This paper is a~contribution to the Special Issue on Twistors from Geometry to Physics in honor of Roger Penrose. The~full collection is available at \href{https://www.emis.de/journals/SIGMA/Penrose.html}{https://www.emis.de/journals/SIGMA/Penrose.html}}}

\Author{Claude LEBRUN}

\AuthorNameForHeading{C.~LeBrun}

\Address{Department of Mathematics, Stony Brook University, Stony Brook, NY 11794-3651 USA}
\Email{\href{mailto:claude@math.stonybrook.edu}{claude@math.stonybrook.edu}}
\URLaddress{\url{http://www.math.stonybrook.edu/~claude/}}

\ArticleDates{Received August 02, 2021, in final form November 15, 2021; Published online November 19, 2021}

\Abstract{The fact that every compact oriented 4-manifold admits spin$^c$ structures was proved long ago by Hirzebruch and Hopf. However, the usual proof is neither direct nor transparent. This article gives a new proof using twistor spaces that is simpler and more geometric. After using these ideas to clarify various aspects of four-dimensional geometry, we then explain how related ideas can be used to understand both spin and spin$^c$ structures in any dimension.}

\Keywords{4-manifold; spin$^c$ structure; twistor space; self-dual 2-form}

\Classification{53C27; 53C28; 57R15}

\renewcommand{\thefootnote}{\arabic{footnote}}
\setcounter{footnote}{0}

 \section[Twistor spaces and spin c structures]{Twistor spaces and spin$\boldsymbol{{}^c}$ structures}

 Every compact oriented 4-manifold admits spin$^c$ structures. The standard proof of this fact is due to Hirzebruch and Hopf \cite{hiho},
 although the result had previously been hinted at by Whitney \cite{whitney}. For readable modernized English-language versions
 of the Hirzebruch--Hopf proof, see Killingback and Rees~\cite{krees} or Gompf and Stipsicz \cite[Section~5.7]{gost}.

 However, the Hirzebruch--Hopf proof is so indirect that it does not really involve the notion of a spin$^c$ structure at all, and it
 proceeds by so completely isolating
 the topological issues from
 the geometric motivation as to make it seem rather formal and
 unenlightening. The main purpose of this article is to give a self-contained
 proof of this important fact that is based on ideas from twistor theory. In the process, we will also see how this result is inextricably related to other fundamental aspects of $4$-dimensional geometry. The article then concludes by putting this $4$-dimensional story in the context of a twistor approach to spin and spin$^c$ structures in other dimensions.

 Let us begin by recalling
 that dimension four is profoundly exceptional for both differential topology and differential geometry.
 This idiosyncrasy is largely attributable to a fluke of Lie-group theory:
 the rotation group ${\rm SO}(4)$ is not a simple Lie group. Instead, its Lie algebra splits as a direct sum
 \begin{gather*}
{\mathfrak{so}}(4) \cong {\mathfrak{so}}(3)\oplus
 {\mathfrak{so}} (3),
\end{gather*}
 as a consequence of the fact that left- and right-multiplication by the unit quaternions ${\rm Sp}(1)$ belong to
 different subgroups of the rotation group. On an oriented Riemannian $4$-manifold $\big(M^4, g\big)$, this gives rise
 to an invariant direct-sum decomposition
\begin{gather*}
\Lambda^2 = \Lambda^+ \oplus \Lambda^-
\end{gather*}
of the bundle of $2$-forms, because the action of ${\rm SO}(4)$ on $2$-forms is isomorphic, via index raising, to
its adjoint representation on the Lie algebra ${\mathfrak{so}}(4)$ of skew $4\times 4$ matrices. This decomposition
in fact coincides with the decomposition of the $2$-forms into the $(\pm)$-eigenspaces of the Hodge star operator
\begin{gather*}
\star\colon \ \Lambda^2 \to \Lambda^2.
\end{gather*}
We will now emphasize our choice of an orientation by focusing on the bundle $\Lambda^+$ of
{\em self-dual $2$-forms} $\varphi$, which are characterized by the condition $\star \varphi = \varphi$.

While the rank-$3$ oriented vector bundle $\Lambda^+\to M$ depends on the conformal class $[g]$ of the Riemannian metric
$g$, the bundles $\Lambda^+_g$ and $\Lambda^+_{g^\prime}$ associated with two different metrics are nonetheless canonically
bundle-isomorphic via the natural identification $\Lambda^+_g = \Lambda^2/\Lambda^-_g$,
because we always have $\Lambda_{g^\prime}^+ \cap\Lambda_g^-= 0$. This algorithm for producing an isomorphism
suffers from some defects, though. First of all, interchanging $g$ and $g^\prime$ does not produce the inverse isomorphism. Second, the isomorphism produced by this algorithm does not preserve the relevant inner products.
Fortunately, however, the latter can be corrected by applying a unique positive, self-adjoint endomorphism to $\Lambda^+$,
and this then allows us to identify the oriented bundles-with-inner-product~$\Lambda^+$ for two different metrics in a manner
that is unique up to isotopy. This will suffice to give a metric-independent meaning to the notions that are the main focus of our discussion.

We now fix a Riemannian metric $g$ on our oriented $4$-manifold $M$, and notice that, since
 ${\rm SO}(4)/\ZZ_2 \cong {\rm SO}(3)\times {\rm SO}(3)$, the $4$-dimensional rotation
 group acts transitively on the unit sphere in $\Lambda^+$. For this reason, any $\omega \in \Lambda^+_x$, $x\in M$,
 with $|\omega | = \sqrt{2}$ can be expressed as
 \begin{gather*}
\omega =e^1\wedge e^2 + e^3 \wedge e^4
\end{gather*}
 in some oriented orthonormal basis for $T_xM$, and hence corresponds, via index raising, to the
 endomorphism $\jmath\colon T_xM \to T_xM$ represented by the matrix
 \begin{gather*}
\left[\begin{array}{cccc} & -1 & & \\ 1 & & & \\ & & & -1 \\ & & 1 & \end{array}\right].
\end{gather*}
In other words, any such $\omega$ defines an almost-complex structure $\jmath$ at $x$ that is compatible with the metric
$g$ and determines the given orientation. Conversely, if $\jmath\colon T_xM\to T_xM$ satisfies
$\jmath^2 = -I$ and $\jmath^*g=g$, and also determines the given orientation of $M$, then $\jmath$ arises, via index raising,
 from a unique
$\omega \in \Lambda^+_x$ with $|\omega |= \sqrt{2}$.

We now define the {\em twistor space} of our oriented Riemannian $4$-manifold $(M,g)$ to be the total space
\begin{gather*}
Z := S_{\sqrt{2}}\big(\Lambda^+\big)= \big\{ \omega \in \Lambda^+\,|\, |\omega|= \sqrt{2}\big\}
\end{gather*}
of the $2$-sphere bundle $\wp\colon Z\to M$ associated with the oriented
rank-$3$ vector bundle $\Lambda^+\to M$ of self-dual $2$-forms. We may then give $Z$ an almost-complex structure
$J\colon TZ\to TZ$, $J^2=-I$, by the following construction, which is essentially due to Atiyah--Hitchin--Singer~\cite{AHS},
and which provides a general Riemannian context for Penrose's non-linear graviton construction~\cite{pnlg}.
We begin by decomposing $TZ$ into vertical and horizontal components,
\begin{gather}\label{horizon}
TZ = \mathsf{V}\oplus \mathsf{H},
\end{gather}
where $\mathsf{V} := \ker {\rm d}\wp$ and $\mathsf{H}$ is induced by parallel transport in $\Lambda^2$ with respect to the
Riemannian connection of $g$.
Now notice that index raising gives us an alternative, conformally invariant description
\begin{gather*}
Z= \big\{ {\mathsf \jmath}\colon T_xM\to T_xM, \,x\in M\,|\, {\mathsf \jmath}^2 =-I, \,{\mathsf \jmath}^* g = g, \,{\mathsf \jmath} >0\big\}
\end{gather*}
of the twistor space. Since the derivative
 ${\rm d}\wp\colon TZ \to TM$ of the bundle projection~$\wp$ induces a~tautological isomorphism~$\mathsf{H}\cong \wp^*TM$,
 we may therefore define
an endomorphism $J_{\mathsf{H}}\colon \mathsf{H}\to \mathsf{H}$ whose action
at $\jmath\in \wp^{-1}(x)$ is the horizontal lift of~$\jmath\colon T_xM\to T_xM$.
Meanwhile, since each fiber~$\wp^{-1}(x)$ of~$\wp\colon Z\to M$
is a round $2$-sphere in an oriented $3$-dimensional inner-product space~$\Lambda^+_x$, we can therefore
declare $J_\mathsf{V}\colon \mathsf{V}\to \mathsf{V}$ to be $+90^\circ$ rotation in the tangent space of each fiber $2$-sphere with respect to the {outward-pointing} orientation. Since these recipes guarantee
that $J_{\mathsf{H}}^2=-I_{\mathsf{H}}$ and $J_{\mathsf{V}}^2=-I_{\mathsf{V}}$,
setting
\begin{gather*}
J:= J_{\mathsf{H}}\oplus J_{\mathsf{V}}
\end{gather*}
now produces an almost-complex structure $J$ on $Z$,
 as illustrated by Figure~\ref{one}.
 \begin{figure}[htb]\centering
\includegraphics[scale=.4]{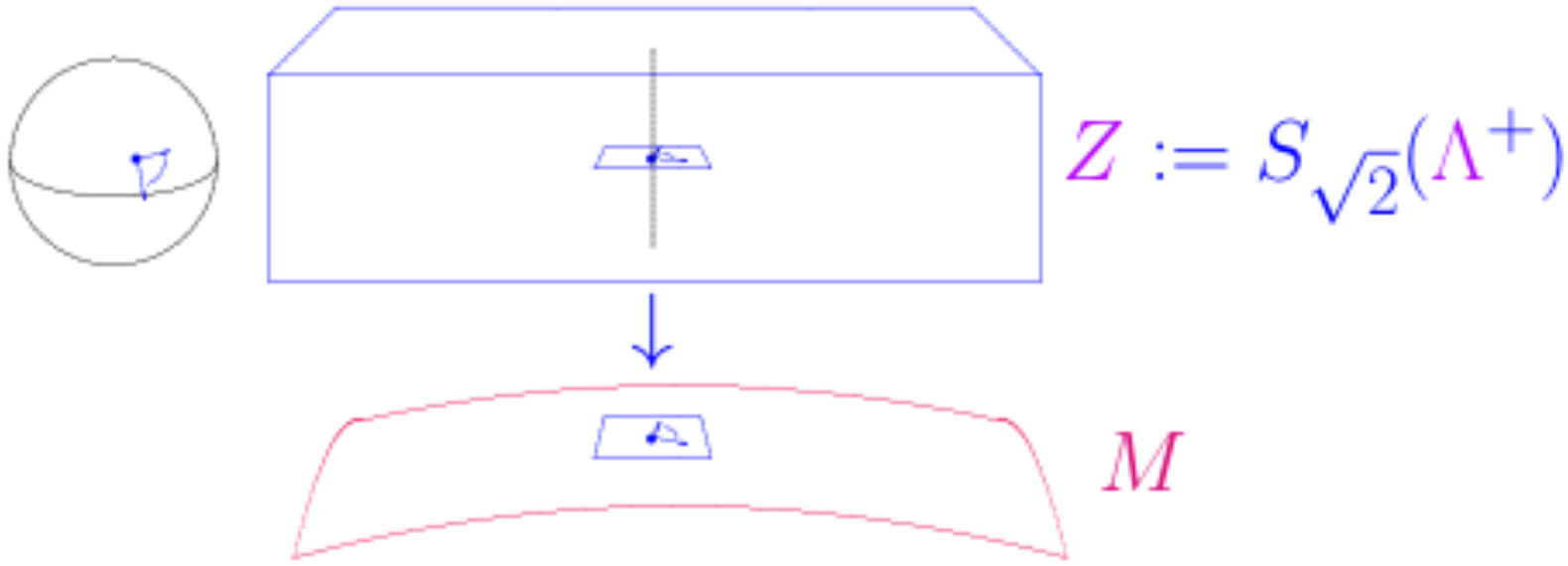}

\caption{The twistor space $Z$ of an oriented Riemannian $4$-manifold $(M,g)$ is the sphere bundle of the oriented rank-$3$ bundle
$\Lambda^+\to M$ of self-dual $2$-forms. This $6$-manifold can be given a canonical almost-complex structure $J$ that is compatible
with the decomposition of $TZ$ into vertical and horizontal subspaces.}\label{one}
\end{figure}

The almost-complex structure $J$ is actually conformally invariant, even though the
decomposition~\eqref{horizon} is not; however, $J$ is only integrable~\cite{AHS,pnlg}
if the Weyl curvature of $(M,g)$ is anti-self-dual. Nonetheless, some useful remnants of integrability persist, even in the general
case; in particular, each fiber $\wp^{-1} (x)\cong \CP_1$ is a $J$-holomorphic curve, and the normal bundle
$\nu = T^{1,0} Z/\mathsf{V}^{1,0}$ of each fiber is a holomorphic bundle $\nu \cong \mathcal{O}(1) \oplus \mathcal{O}(1)$
over this $\CP_1$.

Since this emphasizes the fact that $\wp\colon Z\to M$ may be thought of as a $\CP_1$-bundle,
it seems natural to ask whether this $\CP_1$-bundle can always be expressed as
 the projectivization $\mathbb{P} (\mathbb{V}_+)$
of a rank-$2$ complex vector bundle $\mathbb{V}_+\to M$. As we will see, the answer always turns out to be, ``Yes!''
This assertion
 exactly amounts to the
fact that any oriented $4$-manifold admits spin$^c$ structures. The road that will bring us to this realization begins with the following
definition:

 \begin{geomdef}A {\em spin$^c$ structure} on a connected oriented Riemannian $4$-manifold $(M,g)$ is a complex line bundle ${\mathscr{L}}\to {Z}$
on the twistor space
that has degree $1$ on any ${S^2}$ fiber of ${Z}\to M$.
\end{geomdef}

Here, two isomorphic complex line bundles on $Z$ are considered to define the same spin$^c$ structure. However,
 the first Chern class $c_1$ defines a bijection between equivalence classes of complex line bundles on $Z$
and $H^2(Z, \ZZ)$, in a manner that converts the tensor-product of line bundles into the
addition of cohomology classes. This means that the above definition
can be reformulated as saying that a spin$^c$ structure on $M$ is a cohomology class
 $\mathbf{a} \in H^2(Z,\ZZ)$ with $\langle \mathbf{a}, F \rangle =1$, where
$F \in H_2 (Z, \ZZ )$ is the homology class of a fiber $\wp^{-1} (x) \approx S^2$.

The above should be compared and contrasted with the following:

\begin{standef}
A {\em spin$^c$ structure} on an oriented Riemannian $4$-manifold $(M,g)$ is a~circle bundle
$\widehat{\mathfrak{F}}\to \mathfrak{F}$
over the oriented orthonormal frame bundle that is also
compatibly endowed with the structure of a
 principal ${\rm Spin}^c(4)$-bundle, where
 \begin{gather*}
{\rm Spin}^c(4):= [{\rm Sp}(1)\times {\rm Sp}(1)\times {\rm U}(1)]/\langle (-1,-1, -1)\rangle .
\end{gather*}
\end{standef}

Fortunately, these competing definitions are exactly equivalent.
For example, one can pass from the geometric definition to the standard definition by first expressing
$Z$ as $\mathfrak{F} /{\rm U}(2)$. Thus, if $\mathscr{L}\to Z$ is a complex line bundle of fiber degree $1$,
we can pull the circle bundle $S(\mathscr{L})$ back to obtain a circle bundle over $\mathfrak{F}$,
and this pull-back can then be made into principal ${\rm Spin}^c(4)$-bundle over $M$
using the fact that $H^1 ({\rm SO}(4), \mathcal{E}^\times )= H^1 ({\rm SO}(4), \CC^\times )=H^2( {\rm SO}(4), \ZZ) =\ZZ_2$.

In the opposite direction, given a principal ${\rm Spin}^c(4)$-bundle over $M$ that is also a circle bundle
$\widehat{\mathfrak{F}}\to {\mathfrak{F}}$, we first construct a
vector bundle $\mathbb{V}_+\to M$ by applying the associated bundle construction to the representation
${\rm Spin}^c(4)\to {\rm U}(2)\cong [{\rm Sp}(1) \times {\rm U}(1)]/\langle (-1,-1)\rangle$
obtained by dropping the second ${\rm Sp}(1)$. The map $\widehat{\mathfrak{F}}\to \mathfrak{F}$
then allows us to identify $Z$ with $\mathbb{P} (\mathbb{V}_+)$, and the $\mathcal{O}(1)$-bundle dual to
the tautological line bundle of $\mathbb{P} (\mathbb{V}_+)$ then provides the fiber-degree-$1$ line bundle featured in the
geometric definition.

While every oriented $4$-manifold will turn out to admit spin$^c$ structures, the situation is entirely different for
{\em spin structures}, which are much more restrictive:

\begin{geomdef}
A {\em spin structure} on an oriented Riemannian $4$-manifold $(M,g)$ is a~square-root ${\mathscr{V}^{1/2}}$ of the vertical complex line bundle $\mathscr{V}:= \mathsf{V}^{1,0}$
of the twistor space ${Z}\to M$.
\end{geomdef}

Here, a square-root $\mathscr{V}^{1/2}$ of $\mathscr{V}$ means a line-bundle $\mathscr{L}\to Z$ that is equipped
with a specific isomorphism $\Phi\colon \mathscr{L}\otimes \mathscr{L} \to \mathscr{V}$. Two such square-roots $(\mathscr{L}, \Phi)$ and
 $(\mathscr{L}^\prime, \Phi^\prime)$ are considered to be the same if there is an isomorphism $\Psi\colon \mathscr{L}\to \mathscr{L}^\prime$
 of complex line bundles that induces a~commutative diagram
\setlength{\unitlength}{1.5mm}
\begin{center}\begin{picture}(20,17)(0,3)
\put(2,17){\makebox(0,0){$\mathscr{L}\otimes \mathscr{L}$}}
\put(2,5){\makebox(0,0){$\mathscr{L}^\prime\otimes \mathscr{L}^\prime$}}
\put(21,11){\makebox(0,0){$\mathscr{V}$}}
\put(12,16){\makebox(0,0){$_\Phi$}}
\put(12,7){\makebox(0,0){$_{\Phi^\prime}$}}
\put(-1,11){\makebox(0,0){$_{\Psi\otimes \Psi}$}}
\put(7,16){\vector(3,-1){12}}
\put(8, 7){\vector(3,1){11}}
\put(2,15){\vector(0,-1){8}}
\end{picture}\end{center}

\begin{standef}
A {\em spin structure} on an oriented Riemannian $4$-manifold $(M , g )$ is a~double cover
${\widetilde{\mathfrak{F}}}\to {\mathfrak{F}}$
of the principal ${\rm SO}(4)$-bundle of oriented orthonormal frames
by a principal ${\rm Spin}(4)$-bundle, where
\begin{gather*}
{\rm Spin}(4)= {\rm Sp}(1)\times {\rm Sp}(1).
\end{gather*}
\end{standef}

Once again, these competing definitions are exactly equivalent. Indeed, the ${\rm SO}(3)$-bundle
$\mathsf{F}= \mathfrak{F}/{\rm Sp}(1)$ may be viewed as the circle bundle~$S( \mathscr{V})$ over
$Z= \mathsf{F}/{\rm SO}(2)$, so a square-root of~$ \mathscr{V}$ amounts to a double-cover of~$\mathsf{F}$.
On the other hand, the quotient map $\mathfrak{F}\to \mathsf{F}$ induces an isomorphism of fundamental groups,
so double covers of $\mathsf{F}$ are in bijective correspondence with double covers of~$\mathfrak{F}$.

\section{Oriented rank-3 vector bundles}\label{prelims}

We now focus on the case where our smooth oriented $4$-manifold is {\em compact}. The {\em Euler class}
$\mathsf{e} (\Lambda^+)$ will then play the starring role in our
proofs. However, our approach will be clarified by initially working in a more general context. Let us therefore first
 consider the Euler class $\mathsf{e} (\mathsf{E}) \in H^3(M, \ZZ)$
of any oriented rank-$3$ bundle $\mathsf{E}\to M$.

Now, since the rank of $\mathsf{E}$ is odd, multiplication by $-1$ induces an orientation-reversing
self-isomorphism $\mathsf{E}\to \mathsf{E}$. Hence the Euler class $\mathsf{e}= \mathsf{e}(\mathsf{E})\in H^3 (M,\ZZ)$ satisfies
 $\mathsf{e}= -\mathsf{e}$, and so
is a $2$-torsion element. This Euler class is Poincar\'e dual to the homology class $\in H_1(M, \ZZ)$ of the oriented curve cut out by the
zero set of any section of $\mathsf{E}\to M$ that is transverse to the zero section $\mathsf{0}_M\subset \mathsf{E}$.
Of course, if $\mathsf{E}$ had a nowhere-zero section, its Euler class
$\mathsf{e}(\mathsf{E})$ would consequently vanish. However,
 since $\rank \mathsf{E} < \dim M$, the converse is definitely false! This common mistake seems to
arise from over-familiarity with the Poincar\'e--Hopf theorem, which concerns the case where
 the rank of the bundle equals the dimension of the base; in that context,
generic zeroes are just isolated points, and generic
zeroes of opposite sign can then be eliminated in pairs. In our case, the zero locus of a generic section is instead a union of
oriented circles, and the vanishing of the Euler class just means that this curve bounds an oriented surface.

\begin{xpl} Since $H^3\big(S^4, \ZZ\big)=0$, the oriented rank-$3$ bundle $\Lambda^+\to S^4$ certainly must have
 $\mathsf{e} (\Lambda^+)=0$. However, non-zero global sections of this bundle
certainly {\em do not exist}. Otherwise, the twistor projection
$Z\to S^4$ would admit a smooth global section,
and we could then interpret this as an orientation-compatible almost complex structure
$J$ on $S^4$. However, $S^4$ does not admit an almost-complex structure! For example,
if it did, the index of the spin$^c$ Dirac operator generalizing $\bar{\partial} + \bar{\partial}^*$
would be $\todd \big(S^4\big) = \frac{\chi + \tau}{4} \big(S^4\big) = \frac{1}{2}\not\in \ZZ$; for further discussion, see the commentary following~\eqref{doc} below. This contradiction shows that every smooth section of \mbox{$\Lambda^+\to S^4$} must have non-empty
zero locus, even through $\mathsf{e} (\Lambda^+)=0$. \end{xpl}

We will show in Section~\ref{roundup} that $\mathsf{e} (\Lambda^+)=0$ on any
smooth oriented $4$-manifold $M$, even if $H^3( M, \ZZ)\neq 0$. In order to do this, however,
it will help to first put the question into the broader context of rank-$3$ oriented vector bundles
on compact oriented $4$-manifolds. Here, our approach will crucially depend on the following technical result:

\begin{Proposition} \label{packet}
Let $M$ be a smooth connected compact oriented $4$-manifold, let $\mathsf{E}\to M$ be a~real oriented rank-$3$ vector bundle,
 let $\varpi\colon \mathcal{Z}\to M$ be the
unit $2$-sphere bundle $\mathcal{Z}= S(\mathsf{E})$ with respect to some positive-definite inner product, and let $F\in H_2((\mathcal{Z}), \ZZ)$ denote the homology class of an $S^2$-fiber of $\varpi$.
Then the following conditions are all equivalent:
\begin{enumerate}\itemsep=0pt
\item[$(i)$] The Euler class $\mathsf{e}(\mathsf{E}) \in H^3(M, \ZZ)$ vanishes; 
\item[$(ii)$] There is a cohomology class $\mathbf{a}\in H^2 (\mathcal{Z}, \ZZ)$ with $\langle \mathbf{a}, F\rangle =1$; 
\item[$(iii)$] $H_2(\mathcal{Z}, \ZZ) \cong H_2 (M, \ZZ ) \oplus \ZZ$; and 
\item[$(iv)$] $|\mathfrak{T}_2 (\mathcal{Z})| = |\mathfrak{T}_2 (M)|$, where $\mathfrak{T}_k$ is the torsion subgroup of
 $H_k(\underline{\hphantom{M}}, \ZZ)$. 
\end{enumerate}
\end{Proposition}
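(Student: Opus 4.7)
The plan is to derive all four equivalences from the Gysin sequences of the oriented $S^2$-bundle $\varpi\colon \mathcal{Z} \to M$, with the crucial input being that, as the paper has just observed, $\mathsf{e}$ is a $2$-torsion class.

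For (i) $\Leftrightarrow$ (ii), I would invoke the cohomology Gysin sequence
\begin{gather*}
H^2(M, \ZZ) \xrightarrow{\varpi^*} H^2(\mathcal{Z}, \ZZ) \xrightarrow{\varpi_!} H^0(M, \ZZ) \xrightarrow{\cup \mathsf{e}} H^3(M, \ZZ).
\end{gather*}
Since $M$ is connected, $H^0(M, \ZZ) = \ZZ$, and $\varpi_!(\mathbf{a})$ is precisely the fiber pairing $\langle \mathbf{a}, F\rangle$. So a class $\mathbf{a}$ with $\langle \mathbf{a}, F\rangle = 1$ exists if and only if $1 \in H^0(M)$ lies in $\ker(\cup \mathsf{e})$, which is equivalent to $\mathsf{e} = 0$.

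The other equivalences come from the parallel homological Gysin sequence
\begin{gather*}
H_3(M, \ZZ) \xrightarrow{\cap \mathsf{e}} H_0(M, \ZZ) \to H_2(\mathcal{Z}, \ZZ) \xrightarrow{\varpi_*} H_2(M, \ZZ) \to 0,
\end{gather*}
in which the connecting map sends $1 \in H_0(M)$ to $F$. Because $\mathsf{e}$ is $2$-torsion while $H_0(M, \ZZ) = \ZZ$ is torsion-free, $\cap \mathsf{e}$ vanishes unconditionally, leaving the short exact extension
\begin{gather*}
0 \to \ZZ \cdot F \to H_2(\mathcal{Z}, \ZZ) \to H_2(M, \ZZ) \to 0.
\end{gather*}
A splitting of this extension is precisely a homomorphism $H_2(\mathcal{Z}) \to \ZZ$ sending $F$ to $1$, and by the universal coefficient theorem every such homomorphism is evaluation against some $\mathbf{a} \in H^2(\mathcal{Z}, \ZZ)$. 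Hence (ii) is exactly splitness of the extension, which trivially implies both (iii) and (iv).

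For the converses, note that $\mathfrak{T}_2(\mathcal{Z})$ always injects into $\mathfrak{T}_2(M)$ via $\varpi_*$: any torsion class killed by $\varpi_*$ lies in the torsion-free subgroup $\ZZ \cdot F$, hence vanishes. Under (iv) the orders match, so the injection becomes an isomorphism, meaning every torsion class of $H_2(M)$ lifts to a torsion class of $H_2(\mathcal{Z})$; restricting the extension to $\mathfrak{T}_2(M)$ then yields a split subextension, and since $\Ext^1(H_2(M), \ZZ) \cong \Ext^1(\mathfrak{T}_2(M), \ZZ)$, the full extension class vanishes. The implication (iii) $\Rightarrow$ splitness goes through (iv), since an abstract isomorphism $H_2(\mathcal{Z}) \cong H_2(M) \oplus \ZZ$ forces the torsion orders to agree by the structure theorem. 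The main obstacle is this last finitely-generated-abelian-group argument, where one has to verify that merely counting torsion elements suffices to kill the $\Ext$ class; everything else is an essentially mechanical reading of the Gysin machine.
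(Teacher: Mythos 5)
Your proposal is correct, and while it runs on the same engine as the paper's proof --- the Gysin sequence of $\varpi$, the fact that $\mathsf{e}(\mathsf{E})$ is $2$-torsion, the extension $0\to\ZZ\cdot F\to H_2(\mathcal{Z},\ZZ)\to H_2(M,\ZZ)\to 0$, and universal coefficients --- it closes the circle of implications by a genuinely different route. The paper only proves (i)$\Rightarrow$(ii)$\Rightarrow$(iii)$\Rightarrow$(iv) directly, obtaining the homology extension from the terminal segment of the cohomological Gysin sequence plus Poincar\'e duality, and then completes the loop with (iv)$\Rightarrow$(i) via a torsion count in degree-$3$ cohomology: using $\mathfrak{T}^3\cong\mathfrak{T}_2$ from universal coefficients, it shows $\mathfrak{T}^3(\mathcal{Z})\cong\mathfrak{T}^3(M)$ when $\mathsf{e}=0$ but $\cong\mathfrak{T}^3(M)/\ZZ_2$ otherwise, so equal torsion orders force $\mathsf{e}=0$. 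You instead get (i)$\Leftrightarrow$(ii) in both directions at the $H^0$ spot of the Gysin sequence, identify (ii) with splitness of the homology extension via the universal coefficient surjection $H^2(\mathcal{Z},\ZZ)\to\Hom(H_2(\mathcal{Z},\ZZ),\ZZ)$, deduce (iii) and (iv) from splitness, and recover splitness from (iv) by the $\Ext$ argument: $\varpi_*$ always injects $\mathfrak{T}_2(\mathcal{Z})$ into $\mathfrak{T}_2(M)$ because $\ker\varpi_*=\ZZ\cdot F$ is torsion-free, equal orders then make it an isomorphism whose inverse splits the extension pulled back over $\mathfrak{T}_2(M)$, and the restriction map $\Ext(H_2(M,\ZZ),\ZZ)\to\Ext(\mathfrak{T}_2(M),\ZZ)$ is an isomorphism because the quotient $H_2(M,\ZZ)/\mathfrak{T}_2(M)$ is free, so the full extension class vanishes. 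The step you flag as the main obstacle does go through exactly as you sketch it, so there is no gap; the trade-off is that the paper's degree-$3$ computation is shorter and pinpoints the role of $\mathsf{e}$ very explicitly, while your argument stays entirely in $H_2$ and makes the equivalence of (ii) with splitting of the extension explicit, at the cost of an extra (standard) homological-algebra lemma. Your derivation of the extension from the homological Gysin sequence, with connecting map $H_0(M,\ZZ)\to H_2(\mathcal{Z},\ZZ)$ sending $1\mapsto F$, is a harmless variant of the paper's Poincar\'e-duality derivation of the same sequence.
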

\begin{proof}
Let us first recall that the cohomology of
$\mathcal{Z}$ is related to that of $M$ by the Gysin exact sequence \cite[Section~12]{milnorstaf}
\begin{gather}\label{geyser}
 \cdots H^{k-3}(M)\stackrel{\cup \mathsf{e}}{\to} H^{k}(M)\stackrel{\varpi^*}{\to} H^{k}(\mathcal{Z})\stackrel{\varpi_*}{\to}
 H^{k-2}(M) \stackrel{\cup
 \mathsf{e}}{\to} H^{k+1}(M) \cdots,
\end{gather}
where $\mathsf{e}= \mathsf{e}( \mathsf{E})$ is the Euler class of the oriented rank-$3$ bundle $\mathsf{E}$.
This is really just a disguised form of the long exact sequence
\begin{gather*}
 \cdots \to H^{k}(\mathsf{E}, \mathsf{E}-M){\to} H^{k}(\mathsf{E}){\to} H^{k}(\mathsf{E}-M){\to}
 H^{k+1}(\mathsf{E}, \mathsf{E}-M) \to \cdots
\end{gather*}
of the pair $(\mathsf{E}, \mathsf{E}-M)$, because the zero section $M\hookrightarrow \mathsf{E}$ is a
deformation retract of $\mathsf{E}$, and $\mathcal{Z}$ is a deformation retract of $\mathsf{E}-M$; the Thom isomorphism
$H^{k-3} (M) \to H^{k} ( \mathsf{E}, \mathsf{E}-M)$, given by cupping with the Thom class, therefore
converts one exact sequence into the other.
While this works perfectly well with coefficients in any ring, we will actually only use it for $\ZZ$, $\ZZ_2$, and $\RR$ coefficients in this article.

First observe that the exactness of the
Gysin sequence
\begin{gather*}
\cdots \to H^2 (Z, \ZZ ) \stackrel{\varpi_*}{\to} H^0 (M, \ZZ ) \stackrel{\mathsf{e} \cdot}{\to}H^3 (M, \ZZ ) \to \cdots
\end{gather*}
implies that (i) $\Longrightarrow$ (ii), because the vanishing of $\mathsf{e}= \mathsf{e}(\mathsf{E})$
implies the surjectivity of the map
$\varpi_*\colon H^2( \mathcal{Z} , \ZZ )\to H^0 (M, \ZZ )=\ZZ$ given by $\mathbf{a} \mapsto \langle \mathbf{a}, F\rangle$.

Next, since $H^4(M, \ZZ)= \ZZ$ is free, while $\mathsf{e} (\mathsf{E})$ is a torsion class, observe that
the terminal segment of the Gysin sequence~\eqref{geyser} breaks off as the short exact sequence
\begin{gather*}
0\to H^4(M, \ZZ) \stackrel{\varpi^*}{\to} H^4 (\mathcal{Z}, \ZZ) \stackrel{\varpi_*}{\to} H^2 (M, \ZZ) \to 0,
\end{gather*}
where the image of $H^4(M,\ZZ)$ in $H^4 (\mathcal{Z},\ZZ)$ is generated by the Poincar\'e dual of the
fiber class~$F$. Applying Poincar\'e duality in both $\mathcal {Z}$ and $M$ therefore converts this into an exact sequence
\begin{gather}\label{homology}
0\to\ZZ \stackrel{\cdot F}{\to} H_2 (\mathcal{Z}, \ZZ) {\to} H_2 (M, \ZZ) \to 0.
\end{gather}
If statement (ii) holds, then there is
 an $\mathbf{a}\in H^2(\mathcal{Z}, \ZZ )$ with $\langle \mathbf{a} , F\rangle =1$, and pairing with $\mathbf{a}$
 then provides a left inverse of $\ZZ \hookrightarrow H_2 (\mathcal{Z}, \ZZ)$.
Thus (ii) implies that the exact
sequence~\eqref{homology} {\em splits}, and so yields an isomorphism
\begin{gather}
\label{splitsville}
H_2(\mathcal{Z}, \ZZ) \cong \ZZ\oplus H_2 (M, \ZZ ).
\end{gather}
This shows that (ii) $\Longrightarrow$ (iii).

Next, observe that whenever \eqref{splitsville} holds, one also has
\begin{gather*}
\mathfrak{T}_2 (\mathcal{Z} ) \cong \mathfrak{T}_2 (M),
\end{gather*}
where $\mathfrak{T}_k$ denotes the torsion subgroup of the integer homology $H_k$.
Since this in particular implies that these torsion subgroups have the same order,
it follows that (iii) $\Longrightarrow$ (iv).

On the other hand, the universal coefficients theorem tells us that
\begin{gather}\label{judy}
\mathfrak{T}^3 (\mathcal{Z} ) \cong \mathfrak{T}_2 (\mathcal{Z} ) \qquad \mbox{and} \qquad \mathfrak{T}^3 (M) \cong \mathfrak{T}_2 (M),
\end{gather}
where $ \mathfrak{T}^k = \Ext ( \mathfrak{T}_{k-1} , \ZZ) \cong \mathfrak{T}_{k-1}$ denotes the torsion subgroup of the integer cohomology $H^k$.
Thus, if $|\mathfrak{T}_2 (\mathcal{Z} ) | = |\mathfrak{T}_2 (M)|$, it then follows that $\big|\mathfrak{T}^3 (\mathcal{Z} )\big| = \big|\mathfrak{T}^3 (M)\big|$.
However, the central portion of the Gysin sequence reads
\begin{gather*}
\cdots \to H^0 (M, \ZZ) \stackrel{\mathsf{e}\cdot}{\to} H^3 (M, \ZZ) \to H^3 (\mathcal{Z}, \ZZ ) \to H^1(M, \ZZ)\to \cdots,
\end{gather*}
where $ \mathsf{e}= \mathsf{e}(\mathsf{E})\in H^3 (M, \ZZ)$ is a $2$-torsion class. Since $H^1(M, \ZZ)$ is torsion-free,
this means, in particular, that
\begin{gather}\label{punch}
\mathfrak{T}^3 (\mathcal{Z} )\cong \begin{cases}
 \mathfrak{T}^3 (M ) & \text{if } \mathsf{e}(\mathsf{E})=0, \\
 \mathfrak{T}^3 (M )/\ZZ_2 & \text{otherwise}.
\end{cases}
\end{gather}
Thus, $\big|\mathfrak{T}^3 (\mathcal{Z} )\big| = \big|\mathfrak{T}^3 (M)\big|$ only when $\mathsf{e}(\mathsf{E})=0$.
This proves that (iv) $\Longrightarrow$ (i). We have thus shown that
\begin{gather*}
 {\rm (i)} \Longrightarrow {\rm (ii)} \Longrightarrow {\rm (iii)} \Longrightarrow {\rm (iv)} \Longrightarrow {\rm (i)},
\end{gather*}
and our proof of the proposition is therefore complete. \end{proof}

\section[Existence of spin c structures]{Existence of spin$\boldsymbol{{}^c}$ structures}\label{roundup}

Proposition \ref{packet} and some twistor geometry now imply the following:

\begin{Theorem}\label{existence} Any smooth compact oriented $4$-manifold admits spin$^c$ structures.
\end{Theorem}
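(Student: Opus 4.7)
The plan is to apply Proposition~\ref{packet} to the rank-$3$ bundle $\mathsf{E}=\Lambda^+$, with $\mathcal{Z}=Z$ the twistor space of~$(M,g)$. Since the Geometric Definition of a spin$^c$ structure on $M$ is literally condition~(ii) of the proposition, it suffices to verify \emph{any} one of the four equivalent conditions for~$\Lambda^+$. The most natural target is condition~(i): the Euler class $\mathsf{e}(\Lambda^+)\in H^3(M,\ZZ)$ vanishes.

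To pin down this vanishing, the key idea is to exploit what distinguishes $\Lambda^+$ from a generic oriented rank-$3$ bundle, namely the almost-complex structure $J$ on the total space $Z$ supplied by the Atiyah--Hitchin--Singer construction. This makes $Z$ into a compact almost-complex $6$-manifold with tautological vertical line bundle $\mathscr V=\mathsf V^{1,0}$ and horizontal rank-$2$ bundle $\mathcal H=\mathsf H^{1,0}$; the fibres $\wp^{-1}(x)\cong\CP_1$ are $J$-holomorphic with normal bundle $\mathcal O(1)\oplus\mathcal O(1)$. My first attempt would be to isolate a class $\mathbf a\in H^2(Z,\ZZ)$ with $\langle\mathbf a,F\rangle=1$ directly from these complex-geometric data, thereby verifying condition~(ii) on the nose.

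The main obstacle is a parity issue: each of the obvious Chern classes $c_1(\mathscr V)$, $c_1(\mathcal H)$, and $c_1\bigl(T^{1,0}Z\bigr)$ has \emph{even} fibre degree ($2$, $2$, and~$4$, respectively), so no formal linear combination with classes pulled back from~$M$ can by itself yield a class of fibre degree~$1$. To circumvent this, I would pivot to a homological argument: the homotopy long exact sequence of the $S^2$-fibration $Z\to M$ gives $\pi_1(Z)\cong\pi_1(M)$, and hence $H_1(Z,\ZZ)\cong H_1(M,\ZZ)$. Combining this with Poincar\'e duality on the oriented $6$-manifold $Z$, the universal coefficients identifications~\eqref{judy}, and the Gysin-sequence computation~\eqref{punch}, one should be able to constrain $|\mathfrak{T}_2(Z)|$ tightly enough to verify condition~(iv) $|\mathfrak{T}_2(Z)|=|\mathfrak{T}_2(M)|$; the desired condition~(i) then follows from the equivalences already established in Proposition~\ref{packet}, completing the proof.
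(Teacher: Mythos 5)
Your reduction to Proposition~\ref{packet}, and your observation that the tautological Chern classes $c_1(\mathscr V)$, $c_1(\mathscr H)$, $c_1\big(T^{1,0}Z\big)$ all have even fibre degree, both match the paper's setup. But the step you actually lean on --- that $\pi_1(Z)\cong\pi_1(M)$, Poincar\'e duality on $Z$, universal coefficients, and the Gysin computation should ``constrain $|\mathfrak{T}_2(Z)|$ tightly enough'' to give condition (iv) --- is precisely where the real idea is needed, and the inputs you list cannot supply it. Every one of them holds verbatim for the sphere bundle $\mathcal Z=S(\mathsf E)$ of an \emph{arbitrary} oriented rank-$3$ bundle $\mathsf E\to M$: such a $\mathcal Z$ is a compact oriented $6$-manifold with $\pi_1(\mathcal Z)\cong\pi_1(M)$, the Gysin sequence holds with $2$-torsion Euler class, and there is even always a fibre-degree-$2$ class (the Euler class of the vertical tangent bundle), so nothing you invoke distinguishes $\Lambda^+$ from a general $\mathsf E$. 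Yet for a general $\mathsf E$ condition (iv) is simply \emph{false}: since the Euler class of an oriented rank-$3$ bundle equals $W_3(\mathsf E)=\beta(w_2(\mathsf E))$, choosing $M$ with $2$-torsion in $H^3(M,\ZZ)$ (e.g., $\RR P^3\times S^1$ or an Enriques surface) and $\mathsf E$ with $w_2(\mathsf E)$ not the reduction of an integral class gives $\mathsf e(\mathsf E)\neq 0$, whence \eqref{judy} and \eqref{punch} force $|\mathfrak{T}_2(\mathcal Z)|=|\mathfrak{T}_2(M)|/2$. (Note also that on the $6$-manifold $Z$ duality pairs $\mathfrak{T}_2(Z)$ with $\mathfrak{T}_3(Z)$ and $\mathfrak{T}_1(Z)$ with $\mathfrak{T}_4(Z)$, so $H_1(Z)\cong H_1(M)$ tells you nothing about $\mathfrak{T}_2(Z)$.) Thus pure homology yields only the inequality $|\mathfrak{T}_2(Z)|\leq|\mathfrak{T}_2(M)|$ that the paper already records; indeed the theorem is equivalent to $W_3(TM)=0$, which is a genuinely nontrivial fact about the tangent bundle, not a formal consequence of the fibration structure.

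The missing ingredient is a twistor-specific argument for the reverse inequality, and this is exactly what the paper's proof supplies: it shows $\wp_*\colon\mathfrak{T}_2(Z)\to\mathfrak{T}_2(M)$ is surjective. A torsion class in $H_2(M,\ZZ)$ is represented by an embedded connected oriented surface $\Sigma$ whose normal bundle is trivial (zero self-intersection); the adapted almost-complex structure $\jmath$ given by $90^\circ$ rotation in $T\Sigma$ and in the normal bundle is a section of $Z|_\Sigma$, hence a lift $\hat\Sigma\subset Z$, and the tautological identification of $\mathsf H$ with $\wp^*TM$ gives $\big\langle c_1(\mathscr H),[\hat\Sigma]\big\rangle=\chi(\Sigma)=2(1-\mathfrak g)$. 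Since $\langle c_1(\mathscr H),F\rangle=2$, the corrected class $A=[\hat\Sigma]+(\mathfrak g-1)F$ pairs to zero with all of $H^2(Z,\RR)=\wp^*H^2(M,\RR)\oplus\RR\,c_1(\mathscr H)$, hence is torsion, and $\wp_*A=[\Sigma]$. Your proposal never produces such lifts; to repair it you would need this (or some equivalent) use of the special geometry of $\Lambda^+$, rather than the general topology of $S^2$-bundles.
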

\begin{proof}
For any Riemannian metric $g$ on $M$, our geometric definition of a spin$^c$ structure restates the claim as asserting the existence of
a cohomology class $\mathbf{a}\in H^2(Z, \ZZ)$ with $\langle \mathbf{a} , F\rangle =1$,
where $Z= S(\Lambda^+)$ is the twistor space, and $F= \big[S^2\big]$ is the fiber homology class. The equivalence
(ii) $\Longleftrightarrow$ (iv) in Proposition~\ref{packet} therefore says that it suffices to
 show that $|\mathfrak{T}_2 (Z)| = |\mathfrak{T}_2 (M)|$, where $\mathfrak{T}_2$ denotes the torsion subgroup of the
integer homology $H_2$. Since~\eqref{judy} and~\eqref{punch} also guarantee that
$|\mathfrak{T}_2 (Z)| \leq |\mathfrak{T}_2 (M)|$, it therefore suffices to show that the homomorphism
$\wp_* \colon \mathfrak{T}_2 (Z)\to \mathfrak{T}_2 (M)$ is surjective.

To see this, we begin by recalling that any element of $H_2(M,\ZZ)$ can be represented by a~smooth compact connected embedded oriented surface $\Sigma^2 \subset M^4$. Indeed, any homology class
$[\Sigma ]\in H_2 (M, \ZZ )$ is Poincar\'e dual to an element of~$H^2(M, \ZZ)$, which can then be
realized as the first Chern class of a complex line bundle on $M$. A generic smooth section of
this line bundle then provides a smooth compact oriented embedded surface ${\Sigma}_0$ representing~$[\Sigma]$.
This representative might still not be connected, but we can then correct this by connecting the various components of~${\Sigma}_0$
by narrow tubes $I \times S^1$ that approximate a collection of disjoint arcs between the different connected components.

 Now suppose that $\Sigma\subset M$ is a smooth compact connected oriented surface representing a~torsion class
 $[\Sigma ] \in \mathfrak{T}_2 (M)\subset H_2 (M , \ZZ)$. Since the homological self-intersection
 $[\Sigma ] \bullet [\Sigma ]$ of a~torsion class must vanish, it therefore follows that the normal bundle $\mathfrak{N}$ of $\Sigma\subset M$
 has trivial Euler class, and is therefore trivial. We now define an orientation- and $g$-compatible almost-complex structure $\jmath$
 on $TM|_\Sigma= \mathfrak{N} \oplus T\Sigma$ by declaring it to be given by $90^\circ$ rotation in both $\mathfrak{N}$ and $T\Sigma$.
 Since this $\jmath$ defines a section of $Z|_\Sigma$, its image defines an embedded surface $\hat{\Sigma}\subset Z$
 that projects diffeomorphically to~$\Sigma$ via the twistor projection. However, the twistor almost-complex structure $J_{\mathsf{H}}$
 on the horizontal bundle $\mathsf{H}\subset TZ$ then restricts to $\hat{\Sigma}$ as an exact copy of the action of
 $\jmath$ on $TM|_\Sigma$. Since the normal bundle $\mathfrak{N}$ of $\Sigma$ is trivial, we thus have
 \begin{gather*}
\big\langle c_1 (\mathscr{H}) , [\hat{\Sigma}]\big\rangle = \big\langle c_1 (\CC \oplus T^{1,0}\Sigma ) , [{\Sigma}]\big\rangle = \chi (\Sigma ) = 2(1-\mathfrak{g}),
\end{gather*}
where $\mathfrak{g}$ denotes the genus of $\Sigma$ and where, once again, $\mathscr{H} = \mathsf{H}^{1,0}$.

Now consider the homology class $A= [\hat{\Sigma}] + (\mathfrak{g} -1) F \in H_2 (Z, \ZZ)$, where $F$ once again denotes
 the homology class of a fiber of the twistor projection $\wp\colon Z\to M$. Since $\wp$ induces an oriented diffeomorphism $\hat{\Sigma} \to \Sigma$,
 and since $\wp$ collapses $F$ to a point, $\wp_*\colon H_2(Z, \ZZ) \to H_2(M, \ZZ)$ therefore sends $A\mapsto [\Sigma ]$.
We will now prove that $A$ is a torsion class. To show this, it suffices to check that $\langle \mathbf{a} , A \rangle =0$
 for every $\mathbf{a} \in H^2(Z, \RR)$. However, the $\RR$-coefficient version of the Gysin sequence~\eqref{geyser}
 implies that $H^2 (Z, \RR)= \wp^* H^2(M, \RR)\oplus
 \RR c_1 (\mathscr{H})$, since $\langle c_1 (\mathscr{H}) , F \rangle =2\neq 0$.
But pairing $A$ with an element of $\wp^* H^2(M, \RR)$ amounts to pairing $\wp_*(A)=[\Sigma ]$ with an element of $H^2(M, \RR)$,
which yields zero because $[\Sigma ]$ is a torsion class by hypothesis. But because
 the restriction of $\mathscr{H}$ to an~$S^2$ fiber has degree $+2$, we also have
\begin{gather*}
 \langle c_1 (\mathscr{H}) , A\rangle = \big\langle c_1 (\mathscr{H}) , [\hat{\Sigma}]\big\rangle + (\mathfrak{g} -1) \langle c_1 (\mathscr{H}) , F \rangle
 = 2(1-\mathfrak{g}) + (\mathfrak{g} -1) 2 =0.
 \end{gather*}
This shows that $A$ is a torsion class in $H_2(Z, \ZZ)$ with $\wp_* (A) = [\Sigma ]$,
and therefore proves that $\wp_*\colon \mathfrak{T}_2 (Z)\to \mathfrak{T}_2 (M)$ is surjective, as claimed.
 \end{proof}

Applying Proposition \ref{packet} to the Gysin sequence, we thus have:

\begin{Corollary}\label{sharpie}
Any compact oriented Riemannian $4$-manifold $(M,g)$ satisfies $\mathsf{e} (\Lambda^+)=0$.
Consequently, with either $\ZZ$ or $\ZZ_2$ coefficients,
\begin{gather*}
0\to H^k (M) \stackrel{\wp^*}{\to} H^k (Z) \stackrel{\wp_*}{\to} H^{k-2}(M)\to 0
\end{gather*}
is exact for every $k$, where $\wp\colon Z\to M$ is the twistor projection.
\end{Corollary}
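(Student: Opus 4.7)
The plan is to derive the vanishing of $\mathsf{e}(\Lambda^+)$ from Theorem \ref{existence} via the equivalence (ii) $\Longleftrightarrow$ (i) of Proposition \ref{packet}, and then to feed this into the Gysin sequence of the twistor $2$-sphere bundle.

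First, I would invoke Theorem \ref{existence} to produce a spin$^c$ structure on $M$; by the geometric definition, this is a line bundle $\mathscr{L}\to Z$ of fiber degree $1$, and its first Chern class $\mathbf{a}:= c_1(\mathscr{L}) \in H^2(Z,\ZZ)$ satisfies $\langle \mathbf{a}, F\rangle =1$. This is precisely condition (ii) of Proposition \ref{packet} applied to the oriented rank-$3$ bundle $\mathsf{E}=\Lambda^+$ with $\mathcal{Z}=Z$. The cycle of implications (ii) $\Longrightarrow$ (iii) $\Longrightarrow$ (iv) $\Longrightarrow$ (i) established there then yields $\mathsf{e}(\Lambda^+)=0$ in $H^3(M,\ZZ)$, which is the first assertion of the corollary.

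Second, I would feed $\mathsf{e}(\Lambda^+)=0$ into the Gysin sequence \eqref{geyser}. Because every cup-product map $\cup\, \mathsf{e}\colon H^{k-3}(M)\to H^k(M)$ is then identically zero, the long exact sequence degenerates into the short exact sequences
\begin{gather*}
0\to H^k(M)\stackrel{\wp^*}{\to} H^k(Z)\stackrel{\wp_*}{\to} H^{k-2}(M)\to 0
\end{gather*}
for every $k$, proving the integer-coefficient statement. For $\ZZ_2$ coefficients, the Gysin sequence holds over any coefficient ring, and the $\ZZ_2$-Euler class of $\Lambda^+$ is the mod-$2$ reduction of its integer Euler class, and so also vanishes; the same splitting argument then applies verbatim.

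There is no substantive obstacle: the real work---producing the fiber-degree-$1$ class $\mathbf{a}$, and extracting the topological equivalences that let it force $\mathsf{e}(\Lambda^+)=0$---has already been performed in Theorem \ref{existence} and Proposition \ref{packet}. The corollary is simply the bookkeeping that packages those results together with the Gysin sequence.
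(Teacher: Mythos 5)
Your proposal is correct and follows essentially the same route as the paper: the paper likewise deduces $\mathsf{e}(\Lambda^+)=0$ by combining Theorem \ref{existence} (which supplies a fiber-degree-$1$ class, i.e., condition (ii)) with the equivalences of Proposition \ref{packet}, and then lets the vanishing of $\cup\,\mathsf{e}$ break the Gysin sequence into the stated short exact sequences. Your added remark that the $\ZZ_2$ case follows because the mod-$2$ Euler class $w_3(\Lambda^+)$ is the reduction of the integral Euler class is exactly the right justification for the "either $\ZZ$ or $\ZZ_2$ coefficients" clause.
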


As an application, we therefore have the following:
\begin{Proposition} One has $w_2(TM)= w_2(\Lambda^+)$
for every compact oriented Riemannian four-manifold $(M,g)$. Moreover,
 $w_2(TM)\in H^2(M, \ZZ_2)$ is the mod-$2$ reduction of some integer cohomology class $\mathbf{c}\in H^2(M, \ZZ)$.
\end{Proposition}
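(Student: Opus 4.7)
I would transfer the problem to the twistor space $Z$, where by Corollary~\ref{sharpie} the map $\wp^{*}\colon H^{2}(M, \ZZ_{2}) \to H^{2}(Z, \ZZ_{2})$ is injective. Using the decomposition $TZ = \mathsf{V}\oplus\mathsf{H}$ together with $\mathsf{H} \cong \wp^{*}TM$ and $\wp^{*}\Lambda^{+} \cong \mathsf{V}\oplus\underline{\RR}$ (after splitting off the tautological unit radial section), the first claim is equivalent to $w_{2}(\mathsf{H}) = w_{2}(\mathsf{V})$ in $H^{2}(Z,\ZZ_{2})$. Since both bundles carry $J$-compatible complex structures---$\mathscr{H} = \mathsf{H}^{1,0}$ of complex rank $2$ and $\mathscr{V} = \mathsf{V}^{1,0}$ of rank $1$---this further reduces to showing $c_{1}(\det\mathscr{H}) \equiv c_{1}(\mathscr{V}) \pmod 2$ in $H^{2}(Z, \ZZ)$.

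The crucial step, and what I expect to be the main obstacle, is to establish the canonical isomorphism
\begin{gather*}
\mathscr{V} \,\cong\, \det \mathscr{H}
\end{gather*}
of complex line bundles on $Z$. Both sit naturally as line subbundles of $\wp^{*}\Lambda^{+}\otimes\CC$: on the one hand $\mathscr{V} \subset \mathsf{V}\otimes\CC$, while on the other the Hermitian metric $g$ sends $\det\mathscr{H}|_{\jmath} = \Lambda^{2}_{\CC}(T_{x}M)^{1,0}_{\jmath}$ isomorphically onto the summand $\Lambda^{0,2}_{\jmath}T^{*}M$ of the K\"ahler-type decomposition $(\Lambda^{+}_{x})_{\CC} = \CC\,\omega_{\jmath} \oplus \Lambda^{2,0}_{\jmath} \oplus \Lambda^{0,2}_{\jmath}$, which is orthogonal to $\CC\,\omega_{\jmath}$ and hence lies in $\omega_{\jmath}^{\perp}\otimes\CC = \mathsf{V}_{\jmath}\otimes\CC$. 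At an arbitrary $\jmath_{0} \in Z$, choose an oriented orthonormal basis $f_{1},\dots,f_{4}$ of $T_{x}M$ for which $\omega_{\jmath_{0}} = f^{1}\wedge f^{2} + f^{3}\wedge f^{4}$; expanding $(f^{1}+if^{2})\wedge(f^{3}+if^{4})$ in the self-dual basis $\omega_{1},\omega_{2},\omega_{3}$ exhibits $\Lambda^{0,2}_{\jmath_{0}}$ as the $(+i)$-eigenline of $J_{\mathsf{V}}$ on $\omega_{\jmath_{0}}^{\perp}\otimes\CC$, i.e., as $\mathsf{V}^{1,0}_{\jmath_{0}} = \mathscr{V}|_{\jmath_{0}}$. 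The identification is $U(2)$-equivariant for the stabilizer $U(2) = \operatorname{Stab}_{SO(4)}(\jmath_{0})$ and hence extends to a global isomorphism of line bundles.

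Granted this, $c_{1}(\mathscr{V}) = c_{1}(\det\mathscr{H})$ in $H^{2}(Z,\ZZ)$; reducing mod~$2$ and applying injectivity of $\wp^{*}$ yields $w_{2}(TM) = w_{2}(\Lambda^{+})$. For the integrality assertion, Theorem~\ref{existence} supplies a class $\mathbf{a} \in H^{2}(Z,\ZZ)$ with $\langle \mathbf{a}, F\rangle = 1$, and then the integer form of Corollary~\ref{sharpie} writes $c_{1}(\mathscr{V}) - 2\mathbf{a} = \wp^{*}\mathbf{c}$ for a unique $\mathbf{c} \in H^{2}(M,\ZZ)$ (its fiber degree being zero). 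Reducing mod~$2$, $\wp^{*}\bar{\mathbf{c}} \equiv c_{1}(\mathscr{V}) \equiv w_{2}(\mathsf{V}) = \wp^{*}w_{2}(\Lambda^{+}) = \wp^{*}w_{2}(TM)$, and injectivity of $\wp^{*}$ identifies $w_{2}(TM)$ with the mod-$2$ reduction of $\mathbf{c}$.
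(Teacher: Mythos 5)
Your proposal is correct, and it splits naturally into a part that matches the paper and a part that does not. For the identity $w_2(TM)=w_2(\Lambda^+)$ your route is essentially the paper's: both arguments pull back along $\wp$, invoke the injectivity of $\wp^*\colon H^2(M,\ZZ_2)\to H^2(Z,\ZZ_2)$ from Corollary~\ref{sharpie}, and compare $w_2(\mathsf{H})=\rho[c_1(\mathscr{H})]$ with $w_2(\wp^*\Lambda^+)=w_2(\mathsf{V})$ via the tautological splitting of $\wp^*\Lambda^+$ into the radial trivial factor plus $\mathsf{V}$, together with the identification of $\mathscr{V}$ with $\wedge^2\mathscr{H}$; the paper simply quotes Hitchin's canonical isomorphism (both bundles being the same line subbundle of $\CC\otimes\wp^*\Lambda^+$), whereas you verify it by an explicit pointwise computation with ${\rm U}(2)$-equivariance -- and note that even if your orientation convention for $J_{\mathsf{V}}$ had produced the conjugate line bundle instead, only the sign of $c_1$ would change, so the mod-2 conclusion is untouched. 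For the integral-lift statement your argument genuinely differs from the paper's: there, one forms $W_3(TM)=\beta[w_2(TM)]$ with $\beta$ the Bockstein of $0\to\ZZ\to\ZZ\to\ZZ_2\to 0$, notes $\wp^*W_3(TM)=\beta[\rho(c_1(\mathscr{H}))]=0$, and uses the injectivity of $\wp^*$ on $H^3(\,\cdot\,,\ZZ)$ (again Corollary~\ref{sharpie}) to conclude $W_3(TM)=0$, whence $w_2(TM)$ lifts by exactness; you instead invoke Theorem~\ref{existence} to obtain a fiber-degree-one class $\mathbf{a}$ and produce the lift explicitly as the unique $\mathbf{c}$ with $\wp^*\mathbf{c}=c_1(\mathscr{V})-2\mathbf{a}$, which is legitimate since the Proposition comes after Theorem~\ref{existence}, so no circularity arises. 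The paper's Bockstein route establishes $W_3(TM)=0$ without reference to any particular spin$^c$ structure; your route is more constructive, and as a bonus it essentially recovers the standard fact that the determinant line bundle of any spin$^c$ structure has first Chern class reducing mod 2 to $w_2(TM)$.
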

\begin{proof}
We have $\wp^* w_2 (TM) = w_2 (\wp^* TM) = w_2 (\mathscr{H}) = \rho [ c_1 (\mathscr{H})] =
 \rho \big[ c_1 \big({\wedge}^{2} \mathscr{H}\big)\big]=
w_2 \big({\wedge}^{2} \mathscr{H}\big)\allowbreak = w_2\big(\RR \oplus \wedge^{2} \mathscr{H}^*\big) = w_2 (\wp^* \Lambda^+) = \wp^* w_2 (\Lambda^+)$.
Since $\wp^*\colon H^2(M, \ZZ_2)\to H^2(Z, \ZZ_2)$ is injective by Corollary~\ref{sharpie}, this implies that $w_2(TM)= w_2(\Lambda^+)$.

Now consider the third {integer} Stiefel--Whitney class defined by
$W_3 := \beta (w_2)$,
where~$\beta$ is the Bockstein homomorphism of the long exact sequence
\begin{gather}\label{promotion}
\cdots \to H^2 (\underline{\hphantom{M}}, \ZZ ) \stackrel{2\cdot}{\to} H^2 (\underline{\hphantom{M}}, \ZZ ) \stackrel{\rho}{\to} H^2 (\underline{\hphantom{M}}, \ZZ_2 ) \stackrel{\beta}{\to} H^3 (\underline{\hphantom{M}}, \ZZ )\to \cdots
\end{gather}
induced by
\begin{gather*}
0\to \ZZ \stackrel{2\cdot}{\to} \ZZ\stackrel{\rho}{\to} \ZZ_2\to 0,
\end{gather*}
with $\rho$ denoting reduction mod $2$.
Since $w_2 (\wp^* TM) = \rho [ c_1 (\mathscr{H})]$, it follows that $\wp^* W_3 (TM)= W_3 (\wp^* TM) =\beta [w_2(\wp^* TM) ]=0$.
The injectivity of $\wp^*\colon H^3(M, \ZZ)\to H^3(Z, \ZZ)$ guaranteed by Corollary \ref{sharpie} thus implies that $W_3(TM)= \beta [w_2(TM) ]$ vanishes,
and the exactness of~\eqref{promotion} then tells us that $w_2(TM)$ belongs to the image of $\rho\colon H^2(M,\ZZ) {\to} H^2(M,\ZZ_2)$.
\end{proof}

It is now easy to deduce an analogous result for the simplest non-compact $4$-manifolds:

\begin{Corollary}\label{intern}
 Let $M$ be the interior of a smooth compact oriented $4$-manifold-with-boundary. Then $M$ admits spin$^c$ structures.
\end{Corollary}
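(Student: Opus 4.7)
The strategy is to realize $M$ as an open submanifold of a closed compact oriented $4$-manifold to which Theorem~\ref{existence} can be applied, and then to restrict the resulting spin$^c$ structure. By hypothesis, $M$ is the interior of some smooth compact oriented $4$-manifold-with-boundary $N$. Using a collar neighborhood of $\partial N \subset N$, I would form the smooth double $DN := N \cup_{\partial N} N$, a closed compact oriented $4$-manifold containing $M$ as an open subset (namely, one of the two copies of $N$ with its boundary removed).

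Next, I would equip $DN$ with any Riemannian metric $g$ and let $\wp\colon Z \to DN$ be the associated twistor space. Theorem~\ref{existence} then furnishes a complex line bundle $\mathscr{L} \to Z$ of fiber degree $1$ on each $S^2$-fiber of $\wp$. Since the twistor space of $(M, g|_M)$ is canonically identified with the open subset $\wp^{-1}(M) \subset Z$, the restriction $\mathscr{L}|_{\wp^{-1}(M)}$ is a complex line bundle on the twistor space of $M$ whose fiber degree is still $1$, and thus constitutes a spin$^c$ structure on $M$ in the sense of the geometric definition.

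The only step requiring any real input is the doubling construction: one must check that $DN$ genuinely carries a smooth structure making both copies of $N$ into smooth submanifolds-with-boundary, which is standard and follows from the existence of collar neighborhoods on smooth manifolds-with-boundary. Beyond this, the argument is essentially tautological, since both the twistor fibration $Z\to M$ and the geometric definition of a spin$^c$ structure as a complex line bundle of fiber degree $1$ on $Z$ are local on $M$, and so behave transparently under restriction to an open subset.
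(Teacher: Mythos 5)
Your proposal is correct and follows essentially the same route as the paper: form the double of the compact manifold-with-boundary, apply Theorem~\ref{existence} (componentwise) to get a fiber-degree-$1$ line bundle on its twistor space, and restrict to the open twistor subspace over $M$. The only point to state explicitly is that the second copy of $N$ in the double must carry the \emph{reversed} orientation (the paper writes $N\cup_{\partial N}\overline{N}$) so that the double is genuinely an oriented closed $4$-manifold; with that understood, the restriction argument is exactly the paper's.
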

\begin{proof} If $M$ is displayed as the interior of a compact oriented $4$-manifold-with-boundary $X$,
we first construct the double
 $N= X\cup_{\partial X} \overline{X}$ of $X$, where $\overline{X}$ denotes $X$ equipped with the
opposite orientation. Each component of $N$ then admits a spin$^c$ structure by Theorem \ref{existence},
so taking a~union over components
 gives us a complex line bundle $\mathscr{L}\to S(\Lambda^+)$, where
 $\Lambda^+\to N$ is defined with respect to some Riemannian metric on~$N$. Since $M$ is an open subset of
 $N$, the twistor space of $M$ is an open subset of the twistor space of $N$, and restricting $\mathscr{L}\to S(\Lambda^+)$
 to this subset now gives us a spin$^c$ structure on~$M$. Of course, this construction is carried out with respect to a Riemannian metric
on $M$ that happens to arises by
 restriction from $N$, but the conclusion does not depend on a choice of metric, since the bundle-isomorphism class of
$\Lambda^+\to M$ is actually metric-independent. \end{proof}

There seems to be a widespread consensus \cite{gost,krees} that this result should also hold for general
non-compact $4$-manifolds, including those that are not homotopy-equivalent to finite cell complexes.
Unfortunately, however, a watertight proof of this assertion is currently lacking.
For example, it does not suffice to exhaust $M$ by precompact regions
 $M_1 \subset M_2 \subset \cdots \subset M_j \subset \cdots$ with smooth boundary, and then
observe that $\mathsf{e} (\Lambda^+)=0$ vanishes on each $M_j$ by Corollary~\ref{intern},
because $H^* (M,\ZZ)\neq \varprojlim H^*(M_j,\ZZ)$
 in general; cf.\ \cite[p.~109]{milnorstaf}.

\section[Spin and spin c geometry]{Spin and spin$\boldsymbol{{}^c}$ geometry}\label{products}

While every compact oriented $4$-manifold admits spin$^c$ structures, such structures are typically far from unique:

\begin{Theorem} On any smooth compact oriented $4$-manifold $M$, the cohomology group $H^2(M, \ZZ)$ acts freely and transitively
on the set of spin$^c$ structures on $M$.
\end{Theorem}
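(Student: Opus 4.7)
The plan is to use the cohomological form of the geometric definition together with the short exact sequence coming from Corollary~\ref{sharpie}. After fixing a Riemannian metric $g$ on $M$ with associated twistor projection $\wp\colon Z \to M$, I identify the set of spin$^c$ structures with
\begin{gather*}
\mathcal{S} := \big\{\mathbf{a} \in H^2(Z,\ZZ) \,|\, \langle \mathbf{a}, F\rangle = 1\big\},
\end{gather*}
where $F$ is the fiber homology class; this is the reformulation already noted just after the geometric definition. I would then define the action of $H^2(M,\ZZ)$ on $\mathcal{S}$ by $\mathbf{b}\cdot \mathbf{a} := \mathbf{a} + \wp^*\mathbf{b}$, which at the level of line bundles corresponds to $[\mathscr{L}] \mapsto [\mathscr{L}\otimes \wp^*\mathscr{L}_\mathbf{b}]$ for any line bundle $\mathscr{L}_\mathbf{b}\to M$ with $c_1(\mathscr{L}_\mathbf{b}) = \mathbf{b}$. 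This is well-defined on isomorphism classes since $c_1$ is a bijection converting tensor product to addition, and it preserves $\mathcal{S}$ because $\wp_* F = 0$ in $H_0(M,\ZZ)$ forces $\langle \wp^*\mathbf{b}, F\rangle = 0$.

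Assuming $M$ is connected, the $k=2$ case of Corollary~\ref{sharpie} yields
\begin{gather*}
0 \to H^2(M,\ZZ) \stackrel{\wp^*}{\to} H^2(Z,\ZZ) \stackrel{\wp_*}{\to} H^0(M,\ZZ) \to 0,
\end{gather*}
where, under $H^0(M,\ZZ) = \ZZ$, the Gysin pushforward $\wp_*$ is exactly the fiber-pairing $\mathbf{a}\mapsto \langle \mathbf{a}, F\rangle$. Thus $\mathcal{S} = \wp_*^{-1}(1)$, which is non-empty by Theorem~\ref{existence}. Transitivity is then immediate from exactness at the middle term: for any $\mathbf{a}_1, \mathbf{a}_2\in \mathcal{S}$, the difference satisfies $\wp_*(\mathbf{a}_2 - \mathbf{a}_1) = 0$, so it lies in $\wp^*H^2(M,\ZZ)$ and is of the form $\wp^*\mathbf{b}$. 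Freeness follows from the injectivity of $\wp^*$: if $\mathbf{b}\cdot\mathbf{a} = \mathbf{a}$ for some $\mathbf{a}\in \mathcal{S}$, then $\wp^*\mathbf{b} = 0$, hence $\mathbf{b} = 0$.

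The disconnected case reduces to the connected one componentwise, as both $H^2(M,\ZZ)$ and the set of spin$^c$ structures are additive over connected components. I do not anticipate a real obstacle here: the substantial content, namely $\mathsf{e}(\Lambda^+) = 0$ and the resulting short exact sequence of Corollary~\ref{sharpie}, together with the existence Theorem~\ref{existence}, has already been established, and the theorem is essentially a formal torsor statement saying that $\mathcal{S}$ is the fiber of $\wp_*$ over $1$ and is therefore a coset of $\ker\wp_* \cong H^2(M,\ZZ)$.
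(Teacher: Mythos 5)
Your proposal is correct and follows essentially the same route as the paper: identify spin$^c$ structures with $\wp_*^{-1}(1)\subset H^2(Z,\ZZ)$ and read off the torsor structure from the short exact sequence of Corollary~\ref{sharpie}, with the action given by tensoring with pull-backs of line bundles from $M$. Your added details (well-definedness of the action, non-emptiness via Theorem~\ref{existence}, and the componentwise reduction in the disconnected case) simply flesh out what the paper leaves implicit.
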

\begin{proof} Assuming, for simplicity, that $M$ is connected, Corollary~\ref{sharpie}
tells us that there is an exact sequence
\begin{gather*}
0\to H^2 (M,\ZZ) \stackrel{\wp^*}{\to} H^2 (Z,\ZZ) \stackrel{\wp_*}{\to} \ZZ \to 0,
\end{gather*}
while our geometric definition tells us that $\{ \mbox{spin$^c$ structures on } M\}$ is exactly
$\wp_*^{-1} (1) \!\subset\! H^2 (Z,\ZZ)$. This corresponds to tensoring $\mathscr{L}\to Z$ with
pull-backs of line bundles on~$M$.
\end{proof}

By contrast, many $4$-manifolds do not admit spin structures:

\begin{Theorem} A smooth oriented $4$-manifold $M$ admits a spin structure
iff $w_2(TM)=0$. When this happens, $H^1(M, \ZZ_2)$ acts freely and transitively
on the spin structures of $M$.
\end{Theorem}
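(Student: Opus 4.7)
The plan is to translate the assertion, via the geometric definition of spin structures, into a statement about the complex line bundle $\mathscr{V}\to Z$, and then push it down to $M$ using Corollary~\ref{sharpie}. A spin structure is a square-root $\mathscr{L}$ of $\mathscr{V}$ together with a chosen isomorphism $\Phi\colon\mathscr{L}\otimes\mathscr{L}\to\mathscr{V}$, so its existence amounts to $c_1(\mathscr{V})$ being divisible by $2$ in $H^2(Z,\ZZ)$. Via the Bockstein sequence~\eqref{promotion}, this is in turn equivalent to the vanishing of the mod-$2$ reduction $\rho\, c_1(\mathscr{V}) = w_2(\mathscr{V}_\RR) \in H^2(Z,\ZZ_2)$, where $\mathscr{V}_\RR$ denotes $\mathscr{V}$ regarded as an oriented real rank-$2$ bundle.

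I would next identify $w_2(\mathscr{V}_\RR)$ with $\wp^* w_2(TM)$. The tautological radial section $\omega\mapsto\omega$ yields a canonical splitting $\wp^*\Lambda^+\cong\mathsf{V}\oplus\mathscr{R}$, with $\mathscr{R}$ a trivial real line bundle, so $w_2(\mathsf{V}) = w_2(\wp^*\Lambda^+) = \wp^* w_2(TM)$ by the preceding Proposition. Since $\mathsf{V}$ and $\mathscr{V}_\RR$ coincide as real rank-$2$ bundles, this gives $w_2(\mathscr{V}_\RR) = \wp^* w_2(TM)$. The $\ZZ_2$-coefficient form of Corollary~\ref{sharpie} asserts that $\wp^*\colon H^2(M,\ZZ_2)\to H^2(Z,\ZZ_2)$ is injective, so $w_2(\mathscr{V}_\RR)=0$ if and only if $w_2(TM)=0$, establishing the existence assertion.

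For the torsor claim, I fix a spin structure $(\mathscr{L}_0,\Phi_0)$. Any other spin structure $(\mathscr{L},\Phi)$ determines a complex line bundle $L=\mathscr{L}\otimes\mathscr{L}_0^{-1}$ endowed with a canonical trivialization $\phi\colon L^{\otimes 2}\cong \mathcal{O}$ built from $\Phi$ and $\Phi_0^{-1}$; conversely, tensoring any spin structure by such a pair $(L,\phi)$ produces another one. The equivalence relation in the geometric definition identifies such pairs $(L,\phi)$ modulo isomorphism with reductions of structure group from $\CC^\times$ to $\{\pm 1\}$, i.e., with $H^1(Z,\ZZ_2)$. Hence the set of spin structures is an $H^1(Z,\ZZ_2)$-torsor, and Corollary~\ref{sharpie} at $k=1$ (using $H^{-1}(M,\ZZ_2)=0$) delivers an isomorphism $\wp^*\colon H^1(M,\ZZ_2)\cong H^1(Z,\ZZ_2)$ that transfers this to a free, transitive $H^1(M,\ZZ_2)$-action.

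The main obstacle I expect lies in this last torsor identification. One must check carefully that the paper's equivalence on pairs $(\mathscr{L},\Phi)$ --- requiring an intertwiner $\Psi$ with $\Phi=\Phi'\circ(\Psi\otimes\Psi)$ --- precisely kills the $\{\pm 1\}\subset\CC^\times$ scalar ambiguity in $\Psi$, so that the resulting torsor group is the ``refined'' group $H^1(Z,\ZZ_2)$ of $\ZZ_2$-bundles rather than merely the $2$-torsion subgroup of $H^2(Z,\ZZ)$ obtained by forgetting the trivializations.
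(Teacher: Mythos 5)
Your argument is correct, and its overall skeleton (geometric definition on $Z$, divisibility of $c_1(\mathscr{V})$ by $2$, Bockstein, then descent to $M$ via the injectivity of $\wp^*$ from Corollary~\ref{sharpie}) matches the paper's; but two of your steps run along genuinely different lines. For the existence criterion, the paper computes $w_2$ of the relevant bundle via Hitchin's canonical isomorphism $\mathscr{V}\cong\wedge^2\mathscr{H}$ together with $\mathsf{H}\cong\wp^*TM$, concluding that $\rho[c_1(\mathscr{V})]=w_2(\mathscr{H})=\wp^*w_2(TM)$; you instead use the tautological splitting $\wp^*\Lambda^+\cong\mathsf{V}\oplus\underline{\RR}$ of the pulled-back bundle along the sphere bundle, plus the previously proved identity $w_2(TM)=w_2(\Lambda^+)$. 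Both are valid; yours is a legitimate repackaging (note that the cited Proposition's own proof already passes through $\mathscr{H}$, so your route is not really more elementary, but it does keep the Theorem's proof free of any explicit appeal to $\mathscr{V}\cong\wedge^2\mathscr{H}$). For the torsor statement the divergence is more substantial: the paper reinterprets spin structures as double covers of the circle bundle $S(\mathscr{V})\to Z$ that are nontrivial on the fiber circle, i.e., as suitable classes in $H^1(S(\mathscr{V}),\ZZ_2)$, and reads off the free transitive $H^1(Z,\ZZ_2)$-action from the $\ZZ_2$-Gysin sequence of that circle bundle (using $w_2(\mathscr{V})=0$); you instead run the standard ``difference torsor'' argument with pairs $(L,\phi)$, $\phi\colon L^{\otimes 2}\cong\mathcal{O}$, identified with $\ZZ_2$-reductions and hence with $H^1(Z,\ZZ_2)$. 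The subtlety you flag at the end is exactly the right one, and your resolution is correct: an equivalence $\Psi$ between $(\mathscr{L}\otimes L,\Phi\otimes\phi)$ and $(\mathscr{L},\Phi)$ is multiplication by a nowhere-zero section of $L^{-1}$ whose square is forced by the compatibility condition to carry $\phi$ to the standard trivialization, so the stabilizer is trivial and the torsor group really is $H^1(Z,\ZZ_2)$ (pairs with trivialization), not merely the $2$-torsion of $H^2(Z,\ZZ)$. The paper's route buys a more geometric picture directly linked to the frame-bundle (standard) definition via $S(\mathscr{V})=\mathsf{F}$, while yours is more algebraic and self-contained at the level of line bundles; both finish identically by the isomorphism $H^1(M,\ZZ_2)\cong H^1(Z,\ZZ_2)$ from Corollary~\ref{sharpie}. (Like the paper, your use of Corollary~\ref{sharpie} implicitly restricts the argument to the compact case, so this is not a defect relative to the source.)
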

\begin{proof}
By our geometric definition, a spin structure exists iff the vertical line bundle $\mathscr{V}\to Z$
has a square root. However, as pointed out by Hitchin \cite{hitka}, there is a canonical isomorphism
$\mathscr{V} \cong \wedge^2\mathscr{H}$, because these two bundles have tautological identifications with the very same
line sub-bundle of $\CC\otimes \wp^*( \Lambda^+)$. Thus, $M$ admits a spin structure iff
$c_1 (\mathscr{H})= c_1 \big({\wedge}^2\mathscr{H}\big) = c_1(\mathscr{V})$ is divisible by $2$ in $H^2(Z,\ZZ)$.
But because
\begin{gather*}
\cdots \to H^2 (Z,\ZZ) \stackrel{2\cdot}{\to} H^2 (Z,\ZZ) \stackrel{\rho}{\to} H^2 (Z,\ZZ_2) \to \cdots
\end{gather*}
is exact, this happens iff $\rho [c_1 (\mathscr{H})] = w_2 (\mathscr{H}) = \wp^* w_2(TM)$
vanishes. Since $\wp^* \colon H^2(M, \ZZ_2) \to H^2(Z, \ZZ_2)$ is injective by Corollary~\ref{sharpie},
it therefore follows that $M$ admits a spin structure iff $w_2(TM)=0$.

When $w_2(TM)=0$,
the spin structures are exactly those double covers of the principal $\CC^\times$-bundle $\mathscr{V}^\times = \mathscr{V}-\mathsf{0}_Z$ that also doubly cover
the fiber; equivalently, they correspond to elements of $H^1(S(\mathscr{V}), \ZZ_2)$ that are non-zero on the
fiber circle. Since $w_2(TM)=0$ implies that $w_2(\mathscr{V})=0$, the Gysin sequence
of $\mathscr{V}\to Z$ then simplifies to yield
\begin{gather*}
0\to H^1 (Z , \ZZ_2) \to H^1(S(\mathscr{V}), \ZZ_2) \to H^0 (Z, \ZZ_2) \to 0
\end{gather*}
and it follows that $H^1 (Z , \ZZ_2)$ acts freely and transitively on such elements of $H^1(S(\mathscr{V}), \ZZ_2)$.
Since we also have $H^1(M, \ZZ_2) = H^1(Z,\ZZ_2)$ by Corollary \ref{sharpie},
$H^1(M, \ZZ_2)$ therefore acts freely and transitively on spin structures by tensoring $\mathscr{V}^{1/2}$ with pull-backs
of real line bundles. \end{proof}

We now give a direct twistorial construction of the spinor bundles of a spin structure, and twisted-spinor bundles of
a spin$^c$ structure. This provides yet another way of seeing that our geometric definitions of
such structures are equivalent to the standard definitions.

Given a square-root $\mathscr{V}^{1/2}$ of the vertical line bundle $\mathscr{V}\to Z$ on the twistor space,
we begin by noticing that $\mathscr{V}$ is canonically isomorphic to the tangent bundle $T^{1,0} \CP_1(x)$
on any twistor fiber $\CP_1 (x) : = \wp^{-1} (x)$, $x$. This gives $\mathscr{V}^{1/2}$ a natural fiber-wise
holomorphic structure, and we may therefore define two $2$-dimensional vector spaces at each $x\in M$ by
\begin{gather*}
{ \mathbb{S}}_{+{ x}}= H^0 \big({ \CP_{1}}({ x}) , \mathcal{O} \big(\mathscr{V}^{1/2}\big) \big), \qquad
{ \mathbb{S}}_{-{ x}}= H^0 \big({ \CP_{1}}({ x}) , \mathcal{O} \big({ \nu} \otimes \mathscr{V}^{-1/2}\big)\big),
\end{gather*}
where the normal bundle $\nu = \mathscr{H}|_{\CP_1 (x)}$ of $\CP_1(x) \subset Z$ is thought of as a holomorphic bundle $\cong \mathcal{O}(1) \oplus \mathcal{O}(1) $.
These naturally define smooth vector bundles $\mathbb{S}_\pm$, because we
may define the smooth sections of $\mathbb{S}_+\to M$ (respectively, $\mathbb{S}_-\to M$)
to be the smooth sections of $\mathscr{V}^{1/2}\to Z$
(respectively, ${ \nu} \otimes \mathscr{V}^{-1/2}\to Z$) that are holomorphic on each fiber of $\wp$.
These bundles naturally reduce to the structure group ${\rm SU}(2)={\rm Sp}(1)$, and the principal ${\rm Spin}(4)$-bundle
$\widetilde{\mathfrak{F}}\to M$ then arises as a bundle of adapted frames for $\mathbb{S}_+\oplus \mathbb{S}_-$.

If we instead start with a degree-$1$ complex line-bundle $\mathscr{L}\to Z$, the Gysin sequence allows us to choose an isomorphism
$\mathscr{L}^2= \mathscr{V}\otimes \wp^*\mathsf{L}$ for
a unique complex line-bundle $\mathsf{L}\to M$. Since~$\wp^*\mathsf{L}$ has a natural flat connection on each twistor fiber $\CP_1(x)$,
this gives ${ \mathscr{L}}= (\mathscr{V}\otimes \wp^*\mathsf{L})^{1/2}$ a~natural fiber-wise holomorphic structure.
We thus obtain a pair of vector bundles $\mathbb{V}_\pm \to M$ by setting
\begin{gather*}
{ \mathbb{V}}_{+{ x}}= H^0 \big({ \CP_{1}}({ x}) , \mathcal{O} ({ \mathscr{L}})\big), \qquad
{ \mathbb{V}}_{-{ x}}= H^0 \big({ \CP_{1}}({ x}) , \mathcal{O} ({ \nu} \otimes { \mathscr{L}}
\otimes {\mathscr{V}^*})\big).
\end{gather*}
On any spin subset of $M$, these bundles
can then be re-expressed as
$\mathbb{V}_\pm = \mathbb{S}_\pm\otimes \mathsf{L}^{1/2}$.
Moreover, $\wedge^2 \mathbb{V}_+ = \wedge^2 \mathbb{V}_-= \mathsf{L}$, and the principal
${\rm Spin}^c(4)$-bundle $\widehat{\mathfrak{F}}\to M$ arises as a bundle of adapted frames for $\mathbb{V}_+\oplus \mathbb{V}_-$.

In particular, any spin$^c$ structure gives us a rank-$2$ complex vector bundle $\mathbb{V}_+\to M$ such that
$\mathbb{P} (\mathbb{V}_+)= Z = S (\Lambda^+)$. However, because $\rank_\RR \mathbb{V}_+= 4 = \dim M$,
the Poincar\'e--Hopf paradigm applies, and can be used to predict the existence of non-zero sections of $\mathbb{bV}_+$.
If $M$ is compact, we can always choose a generic section of $\mathbb{V}_+$ that vanishes at only a finite number of points;
by following a suitable self-isotopy of $\mathbb{V}_+\to M$,
we can then arrange for all of these zeroes to be contained in an arbitrarily small ball
$B_\varepsilon (p) \subset M$,
and then use a local trivialization over this ball to alter this section so that it only vanishes at the center $p$ of the ball
(albeit typically with high multiplicity). Applying the projection $\mathbb{V}_+- \mathsf{0}_M \to \mathbb{P} (\mathbb{V}_+)= Z$,
we thus obtain a section of $\wp\colon Z- \wp^{-1}(p) \to M -\{ p \}$, and we may then interpret this section as an almost-complex structure~$J$ on~$M -\{ p \}$. Moreover, the image of this $J$ is a closed codimension-$2$ submanifold that is Poincar\'e dual to $c_1(\mathscr{L})$
for the given spin$^c$ structure, and so completely determines the spin$^c$ structure on $M-\{ p \}$. On the other hand,
removing a point from $M$ does not change~$H^2(\ZZ)$, and Corollary~\ref{sharpie} therefore tells us that
spin$^c$ structures on~$M$ are completely determined by their restrictions to $M-\{ p \}$. In summary, we have proved:

\begin{Theorem}Let $(M, g)$ be a compact connected oriented Riemannian $4$-manifold, and let
${p}\in M$ be an arbitrary base-point. Then
\begin{itemize}\itemsep=0pt
\item $M- \{ p\}$ admits almost-complex structures $J$ compatible with
the given metric and orientation;
\item any such $J$ determines a spin$^c$ structure on $M$; and
\item every spin$^c$ structure on $M$ arises this way.
\end{itemize}
\end{Theorem}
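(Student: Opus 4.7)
The plan is to assemble all three bullets from a single construction. By Theorem \ref{existence}, $M$ carries some spin$^c$ structure, and the equivalence of the geometric and standard definitions produces a rank-$2$ complex vector bundle $\mathbb{V}_+\to M$ with $\mathbb{P}(\mathbb{V}_+)=Z$. Since $\rank_\RR \mathbb{V}_+=4=\dim M$, the Poincar\'e--Hopf paradigm applies: a generic smooth section of $\mathbb{V}_+$ has only isolated zeros, finitely many because $M$ is compact. A suitable self-isotopy of $\mathbb{V}_+\to M$ that drags these zeros along narrow disjoint trails can then herd them all into an arbitrarily small ball $B_\varepsilon(p)$, after which a local trivialization over this ball permits a further modification that leaves a single zero at $p$, typically of high multiplicity. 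Projecting the resulting nowhere-zero section on $M-\{p\}$ through $\mathbb{V}_+-\mathsf{0}_M \to \mathbb{P}(\mathbb{V}_+)=Z$ yields a section of $\wp$ over $M-\{p\}$, which under the conformally invariant description of $Z$ as the bundle of compatible almost complex structures is exactly the desired $J$. This gives the first bullet.

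For the second bullet, I view any such $J$ as a section $\sigma_J\colon M-\{p\}\to Z|_{M-\{p\}}$. Its image is a closed codimension-$2$ oriented submanifold meeting each $S^2$-fiber transversally in a single point, so the Poincar\'e dual class $\mathbf{a}\in H^2(Z|_{M-\{p\}},\ZZ)$ satisfies $\langle \mathbf{a},F\rangle=1$ and, by the geometric definition, determines a spin$^c$ structure on $M-\{p\}$. To extend it uniquely across $p$, I use that excision gives $H^k(M,M-\{p\};\ZZ)\cong H^k(\RR^4,\RR^4-\{0\};\ZZ)$, so the restriction $H^2(M,\ZZ)\to H^2(M-\{p\},\ZZ)$ is an isomorphism; combined with Corollary \ref{sharpie} applied to both $M$ and $M-\{p\}$, this forces the restriction map between spin$^c$ structures to be bijective, so the spin$^c$ structure that $J$ defines on $M-\{p\}$ extends uniquely to one on $M$.

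For the third bullet, I run the first-bullet construction starting from an arbitrary prescribed spin$^c$ structure $\mathfrak{s}$: building $\mathbb{V}_+=\mathbb{V}_+(\mathfrak{s})$ and choosing a generic section with all zeros concentrated at $p$ produces some compatible $J$ on $M-\{p\}$. The Poincar\'e dual of the image of its section in $Z|_{M-\{p\}}$ is (the restriction of) the hyperplane class $c_1(\mathcal{O}_{\mathbb{P}(\mathbb{V}_+)}(1))$, which under the standard-to-geometric translation described earlier is precisely $c_1(\mathscr{L})$ for $\mathfrak{s}$. Thus the spin$^c$ structure that $J$ determines agrees with $\mathfrak{s}$ on $M-\{p\}$, and by the uniqueness just proved, on all of $M$.

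The principal obstacle is the isotopy step --- carefully arranging a self-isotopy of $\mathbb{V}_+\to M$ so that the finite zero locus of a generic section really collapses to a single point while preserving the bundle's isomorphism class. Once that technicality is granted, the remainder reduces to routine bookkeeping in the Gysin-type short exact sequences furnished by Corollary \ref{sharpie} and the excision identity for $H^\ast(M,M-\{p\})$.
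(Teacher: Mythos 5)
Your argument is essentially the paper's own proof: construct $\mathbb{V}_+$ from a spin$^c$ structure, take a generic section, isotope its finitely many zeros into a small ball and merge them at $p$, project via $\mathbb{V}_+-\mathsf{0}_M\to\mathbb{P}(\mathbb{V}_+)=Z$ to get a compatible $J$ on $M-\{p\}$, identify the Poincar\'e dual of its image with $c_1(\mathscr{L})$, and use the fact that deleting a point leaves $H^2$ unchanged together with Corollary~\ref{sharpie} to see that spin$^c$ structures on $M$ correspond bijectively to their restrictions to $M-\{p\}$. The zero-herding step you flag as the main technicality is treated at exactly the same level of detail in the paper, so the proposal is correct and matches the paper's route.
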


Similarly, on the interior of any compact oriented
$4$-manifold-with-nonempty-boundary, there always
exist almost-complex structures compatible with the given orientation. Every
such almost-complex structure moreover determines a spin$^c$ structure, and every spin$^c$ structure arises in this way.

However, in the compact case, the count of zeroes with multiplicity for a section of $\mathbb{V}_+$ is given by
the Euler number
\begin{gather}\label{doc}
 \int_M c_2 (\mathbb{V}_+) = \frac{c_1^2 (\mathsf{L}) - (2\chi + 3\tau )(M)}{4},
\end{gather}
where $\chi (M)$ and $\tau (M)$ respectively denote Euler characteristic and signature of $M$.
One can therefore find a global almost-complex structure on $M$ if and only if $c_1(\mathsf{L})\in H^2(M, \ZZ)$
can be chosen to make the right-hand side of \eqref{doc} vanish. This therefore happens~\cite{hiho} if and only if
there is some $\mathbf{c}\in H^2(M, \ZZ)$ with $\rho (\mathbf{c} ) = w_2(TM)$ and $\mathbf{c}^2 = (2\chi + 3\tau )(M)$.

\section{Other dimensions}

Our discussion has shown that a twistorial perspective can shed
new light on spin$^c$ geometry in dimension four.
We now conclude by pointing out some partial generalizations
of these ideas to other dimension.

If $(M,g)$ is any oriented Riemannian $2m$-manifold, its {\em twistor space} $Z$ is the fiber bundle
$Z = \mathfrak{F}/{\rm U}(m)$, where $\mathfrak{F}\to M$ is the principal ${\rm SO}(2m)$-bundle
of oriented orthonormal frames for~$TM$. Each fiber ${\digamma}$ of $Z$ is thus a copy of the homogeneous
 space ${\rm SO}(2m)/{\rm U}(m)$ of real dimension~$m(m-1)$. However, ${\digamma}$ is actually a compact
{\em Hermitian symmetric space}, and so is naturally a compact complex Fano manifold of complex
dimension $\mathsf{d}=\binom{m}{2}$. The twistor fiber $\digamma_x$ over $x\in M$ thus parameterizes the complex structures on $T_xM\cong \RR^{2m}$ that are
compatible with the given metric and orientation; equivalently, $\digamma$ is just the space
of $\alpha$-planes \cite[Appendix]{pr2} in $\CC\otimes T_xM\cong \CC^{2m}$.
Consequently, the twistor space $Z$ again admits a tautological almost-complex structure $J= J_\mathsf{H} \oplus J_\mathsf{V}$, allowing
us to view the horizontal and vertical subspaces of $TZ$ as complex vector bundles $\mathscr{H}= \mathsf{H}^{1,0}$
and $\mathscr{V}= \mathsf{V}^{1,0}$. Because there is a natural isomorphism $\mathscr{V} = \wedge^2 \mathscr{H}$,
the vertical anti-canonical line-bundle
$K^{-1}_\wp= \wedge^{\mathsf{d}}\mathscr{V}$ has a natural $(m-1)^{\rm st}$ root
$K^{-1/(m-1)}_\wp$ given by $\wedge^m\mathscr{H}$.
On the other hand, $c_1 \big(K^{-1/(m-1)}_\wp\big)$ restricts to any fiber as twice the generator of
$H^2 ({{\digamma}}, \ZZ) = \ZZ$.
 A spin structure on $M$ is just a
square root $K^{-1/2(m-1)}_\wp$ of
$K^{-1/(m-1)}_\wp$, while a spin$^c$ structure on $M$ is simply a line bundle on $Z$
whose Chern class restricts to a fiber ${{\digamma}}$ as the generator of $H^2({{\digamma}},\ZZ)=\ZZ$.
The corresponding spinor and twisted-spinor vector bundles can then be manufactured by a straightforward generalization of
the constructions described in Section~\ref{products}.

The odd-dimensional case is similar. If $(M,g)$ is an oriented Riemannian $(2m-1)$-manifold, with oriented orthonormal
frame bundle $\mathfrak{F}\to M$, its twistor space is defined to be $\mathfrak{F}/{\rm U}(m-1)$, and its fibers ${\digamma}= {\rm SO}(2m-1)/{\rm U}(m-1)\cong {\rm SO}(2m)/{\rm U}(m)$ are thus
identical to the fibers discussed above. The vertical anti-canonical line bundle still has a natural $(m-1)^{\rm st}$ root~$K^{-1/(m-1)}_\wp$, and a spin structure on $M$ is again just a square-root $K^{-1/2(m-1)}_\wp$ of this line-bundle.
A spin$^c$ structure is once again just
a line-bundle on~$Z$ of fiber-degree~$1$.

However, our Gysin-sequence approach to the $4$-dimensional case does not generalize to higher dimensions. This makes the use of the Leray--Serre spectral sequence absolutely essential for a~full understanding of the topological issues that arise in this broader context.

\vspace{-1mm}

\subsection*{Acknowledgements}
 This article is dedicated to my friend and teacher Sir Roger Penrose, in celebration of
his ninetieth birthday and recent Nobel Prize in Physics.
It is a pleasure to thank Dennis Sullivan for
his advice and encouragement, and Jiahao Hu for some very illuminating conversations. This research was supported in part by NSF grant DMS-1906267.

\vspace{-1mm}

\pdfbookmark[1]{References}{ref}

\end{document}